\theoremstyle{plain}
\newtheorem{thm}{Theorem}
\newtheorem{prop}[thm]{Proposition}
\theoremstyle{definition}
\numberwithin{equation}{section}
\newcommand{\N}{\mathbb{N}}
  \def \g{\lambda}
  \def \G{\Lambda}
\def\R{\mathbb{R}}
\def\m{\mathcal{M}}
\def\mmax{\mathcal M_{\rm max}}
\renewcommand*{\backref}[1]{}
\def\MRbibitem{\@ifnextchar[\my@lbibitem\my@bibitem}
\def\mybiblabel#1#2{\@biblabel{{\hyperref{http://www.ams.org/mathscinet-getitem?mr=#1}{}{}{#2}}}}
\def\my@lbibitem[#1]#2#3#4\par{%
	\item[\mybiblabel{#2}{#1}\myhyperanchor{#3}\hfill]#4%
	\@ifundefined{ifbackrefparscan}{}{\BR@backref{#3}}%
	\if@filesw{\let\protect\noexpand\immediate% write to aux-file
		\write\@auxout{\string\bibcite{#3}{#1}}}\fi\ignorespaces%
}
\def\my@bibitem#1#2#3\par{%
	\refstepcounter\@listctr% standard tex item counter for the generic item number
	\item[\mybiblabel{#1}{\the\value\@listctr}\myhyperanchor{#2}\hfill]#3%
	\@ifundefined{ifbackrefparscan}{}{\BR@backref{#2}}%
	\if@filesw\immediate\write\@auxout% write to aux-file
	{\string\bibcite{#2}{\the\value\@listctr}}\fi\ignorespaces%
}
\begin{document}
\title[Typical Uniqueness in Ergodic Optimization]{Typical Uniqueness in Ergodic Optimization}
\author{Oliver~Jenkinson, Xiaoran~Li, Yuexin~Liao, and Yiwei~Zhang}

\address[O.~Jenkinson]{School of Mathematical Sciences, Queen Mary University of London, Mile End Road,
London, E1 4NS, UK}
\email{o.jenkinson@qmul.ac.uk}

\address[X.~Li]{The Division of Physics, Mathematics and Astronomy, California Institute of Technology, 1200 E California Blvd, Pasadena CA 91125, USA}
\email{xiaoran@caltech.edu}

\address[Y.~Liao]{The Division of Physics, Mathematics and Astronomy, California Institute of Technology, 1200 E California Blvd, Pasadena CA 91125, USA}
\email{yliao@caltech.edu}

\address[Y.~Zhang]{School of Mathematical Sciences and big data, Anhui University of Science and Technology, Huainan, Anhui 232001, P.R. China}
\email{2024087@aust.edu.cn, yiweizhang831129@gmail.com}

\subjclass[2020]{Primary: 37A99; Secondary: 37B02, 37D20, 37F05.}

\keywords{Ergodic optimization, maximizing measure, topological dynamics, ergodic theory}

\thanks{Y. Zhang is partially supported by National Natural Science Foundation (Nos.~12161141002, 12271432), and USTC-AUST Math Basic Discipline Research Center.}

%\date\today

%%
\maketitle
\begin{abstract}
For ergodic optimization on any topological dynamical system,
 with real-valued potential function $f$ belonging to any separable Banach space $B$ of continuous functions,
we show that the $f$-maximizing measure is typically unique, 
in the strong sense that 
a countable collection of hypersurfaces contains the exceptional set
of
those $f\in B$ with non-unique maximizing measure.
This strengthens previous results asserting that the uniqueness set is both residual and prevalent.
\end{abstract}

\section{Introduction}\label{introsection}

Given a topological dynamical system on a compact metrizable space $X$,
and a continuous function $f:X\to\R$,
an invariant probability measure $\mu$ is said to be \emph{$f$-maximizing}
if no other invariant probability measure gives $f$ a larger integral than $\int f\, d\mu$, the so-called \emph{maximum ergodic average}.
Early investigations of maximizing measures (see e.g.~\cite{Bou00,HO96a,HO96b,Jen00}) suggested that for 
chaotic dynamical systems, the maximizing measure for typical functions $f$ is unique and periodic. 
The problem of proving that the maximizing measure is typically \emph{periodic} has stimulated considerable work
(see e.g.~\cite{bochi,BZ16,B01,B08,Co16,CLT01,DLZ24,GSZ25,HLMXZ,LZ24,morrisnonlinearity,quassiefken}), and remains an area of active interest;
the archetypal result is that if the dynamical system is suitably hyperbolic, 
then given a space $V$ of suitably regular (e.g.~smooth, or Lipschitz) functions, there is a subset $V_P\subseteq V$
that is large in either a topological or a probabilistic sense,
and has the property that if $f\in V_P$ then the $f$-maximizing measure is unique and periodic.

By contrast the phenomenon of typically \emph{unique} maximizing measure is more universal, and known to hold for 
rather general classes of dynamical system and function space.
For example if $T:X\to X$ is continuous, and $V$ is any topological vector space that is densely and continuously embedded in the space $C(X)$ of continuous functions on $X$, then a generic function in $V$ has a unique maximizing measure;
that is, if $V^!$ denotes those functions in $V$ with a unique maximizing measure, 
then $V^!$ is the intersection
of countably many open dense subsets of $V$ (see \cite{jenkinsonergodicoptimization}).
Recently Morris \cite{morris}, answering a question raised in \cite{BZ16}, has shown that uniqueness of the maximizing measure is typical in a probabilistic sense,
proving that for $T:X\to X$ as above, and $V$ a separable Fr\'echet space that
 is densely and continuously embedded in $C(X)$, the uniqueness set $V^!$ is \emph{prevalent}
(see \cite{huntsaueryorke, huntsaueryorkeaddendum} for the definition of prevalence).

The purpose of this note is twofold: firstly to give alternative proofs,
somewhat shorter than in \cite{jenkinsonergodicoptimization, morris},
 of both the generic uniqueness and prevalent uniqueness results mentioned above,
and secondly to prove a stronger result in the case that $V$ is a separable Banach space that is densely and continuously embedded in $C(X)$, namely that the set $V\setminus V^!$ of functions with non-unique maximizing measure can be covered by a countable collection of hypersurfaces in $V$.
Throughout we shall work with very general notions of dynamical system, allowing $T$ to be multi-valued, and allowing 
group and semi-group actions (see Section \ref{typicalsection} for further details).

\section{Typical uniqueness of maximizing measures}\label{typicalsection}

For a compact metrizable space $X$,
we will consider general topological dynamical systems on $X$.
This includes in particular the continuous self-maps $T:X\to X$ mentioned in Section \ref{introsection},
but also generalisations in two directions.
The first is to let
$\G$ be a topological group or semi-group, and $\phi$ a topological
action of $\G$ on $X$ (i.e.~a continuous map $\phi:\G\times X\to X$,
$(\g,x)\mapsto \phi_\g(x)$ such that $\phi_1=\text{id}_X$ and
$\phi_{\g'}\circ\phi_\g=\phi_{\g' \g}$ for all $\g,\g'\in \G$); 
in this setting a Borel
probability measure $\mu$ on $X$ is said to be invariant if
$\mu(\phi_\g^{-1}A)=\mu(A)$ for all $\g\in \G$ and all Borel sets $A$. 
The second generalisation is to let $T$ be a set-valued mapping $T:X\to 2^X$
that is upper semi-continuous (see e.g.~\cite{akin}); here  a Borel
probability measure $\mu$ on $X$ is said to be invariant
if $\mu(A)\le \mu(T^{-1}A)$ for all Borel sets $A$
(see e.g.~\cite{millerakin}).

 Let
$\m$ denote the set of invariant Borel probability measures.
We shall always assume that $\m$ is non-empty: 
this is well known to be the case if the dynamical system is generated by a single continuous map $T$
(or is a flow or semi-flow), by the Krylov-Bogolioubov theorem \cite{kb} (see \cite[Cor.~6.9.1]{walters}), 
or more generally  whenever $\G$ is amenable (see
e.g.~\cite[p.~97]{glasner}).  
In the case of multi-valued $T$, the set $\m$ is non-empty if and only if 
$\{ (x_i)_{i \in \mathbb{Z}} \in X^{\mathbb{Z}} : x_{i+1} \in T (x_i) \text{ for all } i \in \mathbb{Z} \}$
is non-empty
(see e.g.~\cite{jllz,millerakin}).

When equipped with the weak$^*$ topology, $\m$ is compact and metrizable.
Let $C(X)$ denote the set of continuous real-valued functions defined on $X$, equipped with the supremum norm.
We say that $\mu\in\m$ is
\emph{$f$-maximizing} for $f\in C(X)$ if
$
\int f\, \mathrm{d}\mu = \sup \left\{ \int f\, \mathrm{d}\nu : \nu\in \m\right\}
$.
The set $\m_{\max} (f)$ of $f$-maximizing measures is convex, closed in $\m$, and each 
of its extreme points is also extreme\footnote{If the dynamical system is a group or semi-group action,
then $\m$ is a simplex, and the extreme points of $\m$ are precisely the ergodic invariant measures; if the dynamical system is given by a multi-valued mapping then $\m$ need not be a simplex (see e.g.~\cite{jllz}).} in $\m$.
See e.g.~\cite{Boc18,new survey} for general background on maximizing measures, and the wider area of ergodic optimization.

We first give an alternative, shorter, proof of the following result from \cite[Thm.~3.2]{jenkinsonergodicoptimization}.

\pagebreak

\begin{thm}\label{tvs_residual}
If $V$ is a topological vector space that is densely and continuously embedded in $C(X)$,
then $$V^{!}= \{f\in V: f\text{ has a unique maximizing measure} \}$$
is 
a residual subset of $V$.
\end{thm}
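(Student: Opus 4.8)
The plan is to show that $V^{!}$ is a countable intersection of open dense subsets of $V$, so that it is residual by definition. The natural device is to quantify \emph{non}-uniqueness by how far apart two maximizing measures can be spread in the weak$^*$ topology. Since $\m$ is compact metrizable, fix a metric $d$ on $\m$ inducing the weak$^*$ topology, and for each $n\in\N$ define the ``bad'' set
\[
B_n=\Bigl\{f\in V: \operatorname{diam}\bigl(\m_{\max}(f)\bigr)\ge \tfrac1n\Bigr\},
\]
where the diameter is taken with respect to $d$. Then $V\setminus V^{!}=\bigcup_{n\ge1}B_n$, since $f$ fails to have a unique maximizing measure precisely when $\m_{\max}(f)$ contains two distinct measures, hence has positive diameter. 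Thus it suffices to prove that each $B_n$ is closed and has empty interior in $V$; equivalently that each complement $V\setminus B_n$ is open and dense.

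First I would establish that $B_n$ is \emph{closed}. This rests on upper semicontinuity of the maximizing-measure correspondence $f\mapsto\m_{\max}(f)$: if $f_k\to f$ in $V$ (hence in $C(X)$, by continuity of the embedding) and $\mu_k\in\m_{\max}(f_k)$ with $\mu_k\to\mu$ weak$^*$ (passing to a subsequence using compactness of $\m$), then $\mu\in\m_{\max}(f)$, because $\int f_k\,d\mu_k\to\int f\,d\mu$ while the maximum ergodic average $f\mapsto\sup_{\nu\in\m}\int f\,d\nu$ is continuous on $C(X)$. A standard diagonal argument then upgrades this to: if $f_k\to f$ and each $\m_{\max}(f_k)$ has diameter $\ge 1/n$, one can extract convergent pairs of nearly-diametral measures whose limits lie in $\m_{\max}(f)$ and remain at distance $\ge 1/n$, so $f\in B_n$. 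Hence $B_n$ is closed.

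The main obstacle is \emph{density} of $V\setminus B_n$, i.e.\ showing that functions with small maximizing set are dense; in fact I would prove the stronger and cleaner statement that $V^{!}$ itself is dense, whence each $V\setminus B_n\supseteq V^{!}$ is dense. The key perturbation lemma is that if $\mu$ is \emph{any} ergodic (extreme) measure in $\m_{\max}(f)$, then adding a suitable continuous function that strictly ``rewards'' $\mu$ selects it uniquely. Concretely, since $\mu$ is an extreme point of the compact convex set $\m$, one can find $g\in C(X)$ with $\int g\,d\mu>\int g\,d\nu$ for every other $\nu$ in a neighbourhood-complement, and then $f+\epsilon g$ (for small $\epsilon>0$) has $\mu$ as its unique maximizing measure. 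The delicacy is that the perturbing function must lie in $V$, not merely in $C(X)$: here I would invoke the hypothesis that $V$ is \emph{densely} embedded in $C(X)$ to approximate $g$ by an element of $V$ in the supremum norm, while controlling that the approximation error does not reintroduce competing maximizers. Making this selection argument robust against the difference between $C(X)$-approximation and $V$-topology approximation is the technical heart of the proof.

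Assembling these pieces, $V^{!}=\bigcap_{n\ge1}(V\setminus B_n)$ is a countable intersection of open dense sets, hence residual. I would expect the closedness step to be routine given upper semicontinuity, and the genuine work to reside in the perturbation/selection step establishing density, particularly the interplay between the weak$^*$ geometry of $\m$ and the requirement that all perturbations stay inside $V$.
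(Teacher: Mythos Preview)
Your framework is sound and constitutes a genuinely different proof from the paper's. The paper never introduces the sets $B_n$ or argues density directly; instead it shows the correspondence $\gamma(f)=\m_{\max}(f)$ is upper semi-continuous (essentially your closedness step), invokes Fort's theorem to conclude that the continuity points of $\gamma$ form a residual set, and then identifies those continuity points with $V^{!}$ via a short lower semi-continuity characterisation. Your approach is more hands-on and avoids the black box of Fort's theorem, at the cost of having to prove density yourself. The paper's perturbation $f\mapsto f+\epsilon g$ appears too, but is used only to exhibit \emph{failure} of lower semi-continuity at $f\notin V^{!}$, which is easier than what you need it for.

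There is one genuine overclaim in your density step. You assert that for an extreme $\mu\in\m_{\max}(f)$ one can choose $g$ so that $f+\epsilon g$ has $\mu$ as its \emph{unique} maximizing measure, and hence that $V^{!}$ itself is dense. This is not what your argument delivers. Separating an extreme point $\mu$ from $\m\setminus W$ by a functional $\nu\mapsto\int g\,d\nu$ (which does work: $\mu$ extreme implies $\mu\notin\overline{\mathrm{conv}}(\m\setminus W)$, then apply Hahn--Banach and approximate $g$ in $V$) only yields $\m_{\max}(f+\epsilon g)\subseteq W$, since measures in $W\setminus\{\mu\}$ are not excluded. A single linear perturbation cannot in general isolate $\mu$ unless $\mu$ is an \emph{exposed} point, and extreme points need not be exposed. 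Fortunately you do not need the stronger claim: taking $W$ of diameter less than $1/n$ already gives $f+\epsilon g'\in V\setminus B_n$ arbitrarily close to $f$, which is precisely the density of $V\setminus B_n$ that your scheme requires. So the fix is simply to drop the assertion that $V^{!}$ is dense and argue density of each $V\setminus B_n$ separately via the perturbation you describe; the rest of your outline then goes through.
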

\begin{proof}
Defining $\gamma :V\to 2^{\m}$ by
 $\gamma (f) =\m_{\max} (f)$, we first claim that $\gamma$
is upper semi-continuous.
    Now $\m$ is compact and metrizable,
 and $\gamma (f)$ is closed in $\m$ for all $f \in V$. 
The upper semi-continuity of $\gamma$ is equivalent to the closedness of the graph
$
        gr (\gamma) := \{ (f,\mu) : f \in V ,\, \mu \in \gamma (f) \}
$
    in $V \times \m$ (see \cite[Thm~16.12]{aliprantisborder}).
For any sequence $\{ (f_n, \mu_n) \}_{n \in \mathbb{N}}$ in $gr (\gamma)$, 
converging to $(f_0,\mu_0) \in V \times \m$, the fact that $V$ is continuously embedded in $C(X)$
means that $f_n$ converges to $f_0$ in $C(X)$. 
For each $n \in \N$, $(f_n, \mu_n) \in gr (\gamma)$ means $\mu_n \in \gamma (f_n) =\m_{\max} (f_n)$, so
$
        \int \! f_n \, \mathrm{d} \mu \leqslant \int \! f_n \, \mathrm{d} \mu_n
$
    for all $\mu \in \m$. Letting $n \to \infty$ gives
$
        \int \! f_0 \, \mathrm{d} \mu \leqslant \int \! f_0 \, \mathrm{d} \mu_0
$
    for all $\mu \in \m$, so $\mu_0 \in \m_{\max} (f_0) =\gamma (f_0)$, and thus $(f_0, \mu_0) \in gr (\gamma)$. Thus $gr (\gamma)$ is closed in $V \times \m$, so $\gamma$ is indeed upper semi-continuous.

A theorem of Fort \cite{fort2} (cf.~\cite[p.~61]{engelking}, \cite{Mo13}) asserts that an upper semi-continuous
map from a topological space to the set of non-empty compact subsets of a metric space
has the property that its set of continuity points is residual.
Our theorem will follow, therefore, if it can be shown that
the set of continuity points for $\gamma:V\to 2^{\m}$ is precisely $V^!$.

Recalling (see \cite[Defn.~16.2]{aliprantisborder}) that lower semi-continuity of $\gamma$  at $f$ means that for every open subset $U$ of $\m$ intersecting $\gamma (f)$, 
the set $\gamma^{-1} (U) := \{ h \in V : \gamma (h) \cap U \neq \emptyset \}$ is a neighbourhood of $f$ in $V$,
we 
claim that $\gamma$ is lower semi-continuous at $f$ if and only if $f\in V^!$.
For this, assume first that $f \in V^!$.
If $U$ is an open set in $\m$ intersecting with $\gamma (f)$, then $U$ contains the singleton $\gamma (f)$. 
Upper semi-continuity of $\gamma$ then
implies that there is an open neighbourhood $U_f$ of $f$ in $V$ such that $\gamma (g) \subseteq U$ for every $g \in U_f$. 
Now $\gamma (g) =\m_{\max} (g)$ is non-empty for all $g \in V$, so $\gamma (g) \cap U \neq \emptyset$ for all $g \in U_f$. So $U_f \subseteq \gamma^{-1} (U)$, and therefore $\gamma$ is lower semi-continuous at $f$.

    If $f \in V\setminus V^!$ then choose distinct measures $\mu_1,\, \mu_2 \in \m_{\max} (f) =\gamma (f)$,
and some $g \in V$ with $\int \! g \, \mathrm{d} \mu_1 > \int \! g \, \mathrm{d} \mu_2$, the existence 
of $g$ being ensured by the denseness of $V$ in $C(X)$. 
If
    $
        U := \{ \mu \in \m : \int \! g \, \mathrm{d} \mu < \int \! g \, \mathrm{d} \mu_1 \}
    $,
    then $U$ is an open subset of $\m$ intersecting with $\gamma (f)$. 
If $\mu \in U$ then $\int \! f \, \mathrm{d} \mu_1 \geqslant \int \! f \, \mathrm{d} \mu$ and $\int \! g \, \mathrm{d} \mu_1 > \int \! g \, \mathrm{d} \mu$, so
$
        \int \! f+ \epsilon g \, \mathrm{d} \mu_1 > \int \! f+ \epsilon g \, \mathrm{d} \mu
    $
 for all $\epsilon >0$.
    But $\mu \in U$ was arbitrary, so $U \cap \gamma (f +\epsilon g) = U \cap \m_{\max} (f+ \epsilon g) =\emptyset$ for every $\epsilon >0$, thus $\gamma^{-1} (U)$ is not a neighbourhood of $f$ in $V$,
and hence $\gamma$ is not lower semi-continuous at $f$.
\end{proof}

When $V$ is a separable Banach space, Theorem \ref{tvs_residual} can be strengthened as
in the following Theorem \ref{hypersurfaces}.
To state it, a subset $S$ of $V$ 
will be called a \emph{hypersurface}
(see \cite[p.~92]{BL00})
if it 
is the graph of a function, defined on a hyperplane (i.e.~codimension-one subspace) in $V$, that is the difference of two Lipschitz convex functions.
That is, $S$ can be written as
$S=\{f + (\phi(f)-\psi(f))g:f\in W\}$,
where $g\in V$ and  $W$ is a 
closed subspace of $V$
with $V=W+\{tg:t\in\R\}$,
and $\phi,\psi$ are Lipschitz continuous convex functions on $V$.
Such an $S$ is sometimes referred to as a \emph{delta-convex hypersurface}
(see e.g.~\cite{pavlica, veselyzajicek, zajicek3}),
or a
\emph{(c-c) hypersurface}
(see \cite[Defn.~2]{zajicek1}).

\begin{thm}\label{hypersurfaces}
If $V$ is a separable Banach space that is densely and continuously embedded in $C(X)$,
then 
$$V^{!}= \{f\in V: f\text{ has a unique maximizing measure} \}$$
is 
such that its complement 
can be covered by countably many 
hypersurfaces.
\end{thm}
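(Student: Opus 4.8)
The plan is to recast the whole problem as a statement about the differentiability of a single convex function on $V$ and then to quote a structural theorem of Zaj\'{\i}\v{c}ek. First I would introduce the maximum ergodic average
$$\beta(f):=\max_{\nu\in\m}\int f\,\mathrm{d}\nu ,$$
and record its two basic properties: it is convex, being a pointwise supremum of the linear functionals $f\mapsto\int f\,\mathrm{d}\nu$, and it is Lipschitz, since $|\beta(f)-\beta(g)|\le\|f-g\|_\infty\le C\|f-g\|_V$, where $C$ bounds the embedding $V\hookrightarrow C(X)$. Each $\nu\in\m$ gives a functional $L_\nu\in V^{*}$ by $L_\nu(h)=\int h\,\mathrm{d}\nu$; the denseness of $V$ in $C(X)$ makes $\nu\mapsto L_\nu$ injective, while weak$^*$-to-weak$^*$ continuity makes $\widetilde{\m}:=\{L_\nu:\nu\in\m\}$ a weak$^*$-compact convex subset of $V^{*}$ whose support function is exactly $\beta$.

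The key step is to compute the subdifferential of $\beta$. Because $\beta$ is the support function of $\widetilde{\m}$, convex duality identifies $\partial\beta(f)$ with the exposed face $\{L\in\widetilde{\m}:L(f)=\beta(f)\}$, which is precisely $\{L_\mu:\mu\in\m_{\max}(f)\}$; in particular every subgradient of $\beta$ at $f$ arises from a maximizing measure. Combined with the injectivity of $\nu\mapsto L_\nu$, this shows that $\partial\beta(f)$ is a singleton if and only if $\m_{\max}(f)$ is, that is, if and only if $f\in V^{!}$. Since $\beta$ is continuous, $\partial\beta(f)$ is a singleton exactly when $\beta$ is G\^{a}teaux differentiable at $f$, so $V\setminus V^{!}$ is precisely the set of points at which the continuous convex function $\beta$ fails to be G\^{a}teaux differentiable.

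It then remains to invoke Zaj\'{\i}\v{c}ek's theorem \cite{zajicek1}: the set of non-G\^{a}teaux-differentiability points of a continuous convex function on a separable Banach space can be covered by countably many (c-c) hypersurfaces. As the (c-c) hypersurfaces of \cite{zajicek1} are exactly the hypersurfaces defined above, the conclusion follows. I expect the main obstacle to be the subdifferential identification --- in particular the assertion that no subgradient of $\beta$ lies outside $\widetilde{\m}$, which rests on the weak$^*$-closedness and convexity of $\widetilde{\m}$ together with a Hahn--Banach separation argument --- together with the routine check that the present notion of hypersurface agrees with that of \cite{zajicek1}; all of the genuine differentiability content is then supplied by the cited theorem.
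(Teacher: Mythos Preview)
Your proposal is correct and follows the same overall strategy as the paper: both arguments introduce the maximum ergodic average $\beta$, verify that it is Lipschitz and convex, show that $\beta$ is G\^ateaux differentiable at $f$ precisely when $f\in V^{!}$, and then invoke Zaj\'{\i}\v{c}ek's theorem to conclude.

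The only substantive difference lies in how the equivalence ``G\^ateaux differentiable $\Longleftrightarrow$ unique maximizing measure'' is established. The paper computes one-sided directional derivatives directly, sandwiching $(\beta(f+\tau g)-\beta(f))/\tau$ between $\sup_{\mu\in\m_{\max}(f)}\int g\,\mathrm{d}\mu$ and $\sup_{\mu\in\m_{\max}(f+\tau g)}\int g\,\mathrm{d}\mu$, and then uses the upper semi-continuity of $f\mapsto\m_{\max}(f)$ (established in the proof of Theorem~\ref{tvs_residual}) to pass to the limit. You instead identify $\beta$ as the support function of the weak$^*$-compact convex set $\widetilde{\m}\subset V^*$, deduce $\partial\beta(f)=\{L_\mu:\mu\in\m_{\max}(f)\}$ by convex duality, and appeal to the standard fact that a continuous convex function is G\^ateaux differentiable exactly where its subdifferential is a singleton. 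Your route is more conceptual and avoids the explicit limit computation, at the cost of importing some convex-analytic machinery (the subdifferential of a support function, and the separation argument you correctly flag as the delicate point); the paper's route is more elementary and self-contained. Both are valid and of comparable length.
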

\begin{proof}
If $\Vert \cdot \Vert_{\infty}$ denotes the norm in $C(X)$,
then
$\bigl| \int \! f \, \mathrm{d} \mu -\int \! g \, \mathrm{d} \mu \bigr| \leqslant \Vert f-g \Vert_{\infty}$ for all $f ,\, g \in V$,
$\mu \in \m$, so $|\beta (f) -\beta (g) | \leqslant \Vert f-g \Vert_{\infty}$. 
But $V$ is continuously embedded in $C(X)$, 
so $\beta$ is Lipschitz continuous. 
Now $\beta$ is also convex, since if $f,\, g \in V$, $a \in [0,1]$, and $\mu \in \mmax (af +(1-a)g)$, then
$
        \beta (af +(1-a)g) =a \int \! f \, \mathrm{d} \mu +(1-a) \int \! g \, \mathrm{d} \mu \leqslant a \beta (f) +(1-a) \beta (g)
$.

By a theorem of Zaj\' i\v cek \cite{zajicek1} (see also e.g.~\cite[Thm.~4.20]{BL00}), a continuous convex function on 
a separable Banach space
is Gateaux differentiable except on a
set that  can be covered by
countably
 many  hypersurfaces.
It suffices to show, therefore, that
$f\in V^!$ if and only if $f$ is a point of Gateaux differentiability for
$\beta$.

    For this, let $f,\, g \in V$, and $\mu \in \mmax (f)$, and note that
$
        \beta (f+g) \geqslant \int \! (f+g) \, \mathrm{d} \mu =\beta (f) +\int \! g \, \mathrm{d} \mu
$.
    Let $\tau >0$. Replacing $(f,g)$ with $(f, \tau g)$ and $(f+ \tau g, -\tau g)$ gives
    \begin{equation}\label{g3r89bu}
        \sup_{\mu \in \m_{\max}(f)} \int \! g \, \mathrm{d} \mu \leqslant \frac{\beta(f + \tau g) - \beta(f)}{\tau} \leqslant \sup_{\mu \in \m_{\max} (f+\tau g)} \int \! g \, \mathrm{d} \mu.
    \end{equation}

First suppose that $f\in V^!$, with  $\m_{\max} (f) = \{ \mu_0 \}$, say.
As noted in the proof of Theorem \ref{tvs_residual},
the map $f \mapsto \m_{\max}(f)$ is continuous at $f$. Letting $\tau \to 0^+$ in \eqref{g3r89bu} implies
    \begin{equation*}
        \lim_{\tau \to 0^+} \frac{\beta(f + \tau g) - \beta(f)}{\tau} 
        = \int g \, \mathrm{d} \mu_0.
    \end{equation*}
    By substituting $g$ with $-g$, we obtain:
    \begin{equation*}
        \lim_{\tau \to 0^-} \frac{\beta(f + \tau g) - \beta(f)}{\tau} 
        = \int g \, \mathrm{d} \mu_0.
    \end{equation*}
    The above two limits are equal, so the directional derivative of $\beta$ at $f$ in the direction $g$ exists and is equal to $\int \! g \, \mathrm{d} \mu_0$. But $g \in V$
was arbitrary, so
$\beta$ is Gateaux differentiable at $f$.

Now suppose that $f\in V\setminus V^!$, so
there exist distinct measures $\mu_1, \mu_2 \in \m_{\max}(f)$, and since $V$ is dense in $C(X)$, 
there exists $g \in V$ such that $\int \! g \, \mathrm{d} \mu_1 \neq \int \! g \, \mathrm{d} \mu_2$.
    Now \eqref{g3r89bu} implies
    \begin{equation*}
        \liminf_{\tau \to 0^+} \frac{\beta(f + \tau g) - \beta(f)}{\tau} \geqslant \sup_{\mu \in \m_{\max} (f)} \int \! g \, \mathrm{d} \mu.
    \end{equation*}
    By substituting $g$ with $-g$, we obtain
    \begin{equation*}
        \limsup_{\tau \to 0^-} \frac{\beta(f + \tau g) - \beta(f)}{\tau} \leqslant \inf_{\mu \in \m_{\max} (f)} \int \! g \, \mathrm{d} \mu.
    \end{equation*}
    Since $\int \! g \, \mathrm{d} \mu_1 \neq \int \! g \, \mathrm{d} \mu_2$, we have $\sup\limits_{\mu \in \m_{\max}(f)} \int \! g \, \mathrm{d} \mu > \inf\limits_{\mu \in \m_{\max}(f)} \int \! g \, \mathrm{d} \mu$, so
    \[
    \liminf_{\tau \to 0^+} \frac{\beta(f + \tau g) - \beta(f)}{\tau} > \limsup_{\tau \to 0^-} \frac{\beta(f + \tau g) - \beta(f)}{\tau},
    \]
    so the directional derivative of $\beta$ in the direction $g$ does not exist at $f$. That is, $\beta$ is not Gateaux differentiable at $f$.
\end{proof}

If $\m$ has only countably many extreme points, the hypothesis of Theorem \ref{hypersurfaces} can be weakened and its conclusion strengthened. In the following, by a hyperplane we mean a codimension-one subspace.

\begin{prop}\label{countable_prop}
If $\m$ has countably many extreme points, and
 $V$ is a
topological vector space that is densely and continuously
embedded  in
$C(X)$, then 
$$V^{!}
= \{f\in V: f\text{ has a unique maximizing measure} \}$$
 is  the complement of a set that can be covered by countably many hyperplanes.
\end{prop}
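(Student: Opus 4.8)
The plan is to exploit the countability of the extreme points of $\m$ directly, exhibiting the covering hyperplanes explicitly as kernels of the linear functionals that separate pairs of extreme measures. I would enumerate the (countable) set of extreme points of $\m$ as $\{e_k\}_{k\in\N}$, and for each ordered pair $(i,j)$ with $i\neq j$ define the linear functional $\ell_{ij}:V\to\R$ by $\ell_{ij}(h)=\int h\,\mathrm{d}(e_i-e_j)$, setting $H_{ij}=\ker\ell_{ij}$. The goal is then to prove the containment $V\setminus V^!\subseteq\bigcup_{i\neq j}H_{ij}$, the right-hand side being a countable union of hyperplanes.

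First I would verify that each $H_{ij}$ is genuinely a hyperplane, i.e.\ that $\ell_{ij}$ is a nonzero continuous linear functional on $V$. Continuity is immediate from $\bigl|\int h\,\mathrm{d}(e_i-e_j)\bigr|\le 2\Vert h\Vert_\infty$ together with the continuous embedding of $V$ in $C(X)$. Nonvanishing uses denseness: since $e_i\neq e_j$ there is some $h\in C(X)$ with $\int h\,\mathrm{d}(e_i-e_j)\neq 0$, and approximating $h$ by an element of the dense subspace $V$ produces $g\in V$ with $\ell_{ij}(g)\neq 0$. Thus $H_{ij}$ is a (closed) codimension-one subspace of $V$, and since $\{(i,j):i\neq j\}$ is countable, so is the family $\{H_{ij}\}$.

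The substance of the argument is the containment. Suppose $f\in V\setminus V^!$, so that $\m_{\max}(f)$ is a compact convex subset of $\m$ that is not a singleton. By the Krein--Milman theorem $\m_{\max}(f)$ is the closed convex hull of its extreme points, so a non-singleton $\m_{\max}(f)$ must have at least two distinct extreme points. Each such extreme point of $\m_{\max}(f)$ is also extreme in $\m$, hence lies in $\{e_k\}$; I would thus select $e_i\neq e_j$ in $\m_{\max}(f)$. As both are $f$-maximizing, $\int f\,\mathrm{d}e_i=\beta(f)=\int f\,\mathrm{d}e_j$, whence $\ell_{ij}(f)=0$ and $f\in H_{ij}$. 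This yields $V\setminus V^!\subseteq\bigcup_{i\neq j}H_{ij}$, as required.

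I do not expect a serious obstacle here: the whole argument is essentially forced once the two standing hypotheses are applied in the right places. The only points needing care are (a) checking that each $\ell_{ij}$ is nonzero on $V$ itself rather than merely on $C(X)$, which is precisely where denseness of $V$ in $C(X)$ enters, and (b) confirming that a non-singleton $\m_{\max}(f)$ has two distinct extreme points drawn from the extreme points of $\m$, for which Krein--Milman combined with the already-recorded fact that extreme points of $\m_{\max}(f)$ are extreme in $\m$ suffices. It is worth noting that this approach uses neither completeness nor separability of $V$, which explains the weakened hypothesis, and that it produces genuine hyperplanes rather than the delta-convex hypersurfaces of Theorem \ref{hypersurfaces}, which explains the strengthened conclusion.
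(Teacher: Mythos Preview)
Your proposal is correct and follows essentially the same approach as the paper: the paper likewise defines, for each unordered pair of distinct extreme measures $\mu,\nu$, the hyperplane $V_{\mu\nu}=\ker(\mu-\nu)$ and argues that $V\setminus V^!\subseteq\bigcup V_{\mu\nu}$, using denseness of $V$ to guarantee each $\mu-\nu$ is nonzero on $V$. Your version is slightly more explicit in invoking Krein--Milman to extract two distinct extreme points from a non-singleton $\m_{\max}(f)$, a step the paper leaves implicit by citing the earlier-stated fact that extreme points of $\m_{\max}(f)$ are extreme in $\m$.
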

\begin{proof}
Each $\mu\in\m$ can be considered as a continuous linear functional on $C(X)$, via $\mu(f)=\int\! f d\mu$, 
and its restriction to $V$ is also continuous, since $V$ is continuously embedded in $C(X)$.
Let $\mathcal{E}$ be the countable set of extreme points of $\m$, and 
for $\mu,\nu\in\mathcal{E}$, with $\mu\neq\nu$, let 
$V_{\mu \nu}:=\{f\in V: \int \! f \, d\mu = \int \! f\, d\nu\}$,
the kernel of $\mu-\nu$. 
Now $\mu -\nu$ is not the zero functional on $V$, since $V$ is densely embedded in $C(X)$,
so $V_{\mu \nu}$ is a proper closed subspace of $V$.

If $f\in V\setminus V^!$ then $\int \! f \, d\mu = \int \! f\, d\nu=\beta(f)$ for some $\mu,\nu\in\mathcal{E}$ with $\mu\neq\nu$, therefore
$
 V\setminus V^! 
$
is contained in the countable union
(over $\mu,\nu\in\mathcal{E}$, $\mu\neq\nu$)
 of the hyperplanes
$V_{\mu \nu}$, as required.
\end{proof}

The class of sets that can be covered by countably many hyperplanes from Proposition 
\ref{countable_prop}, and the class of sets that can be covered by countably many hypersurfaces
from Theorem \ref{hypersurfaces}, are examples of classes of \emph{negligible sets} or \emph{null sets}
(see e.g.~\cite[p.~167]{BL00}, where  these classes appear as the smallest, and next smallest, in a hierarchy
of $\sigma$-ideals of negligible sets).
Another notion, due to
Christensen \cite{christensen1, christensen2}, is that of \emph{Haar null sets} (see e.g.~\cite[Ch.~6.1]{BL00} and \cite{elekesnagy} for an overview).
The class of Haar null sets is larger than the classes mentioned above
(in fact it is the largest class in the hierarchy from \cite[p.~167]{BL00};
see also e.g.~\cite[pp.~15--16]{PZ01}, \cite[p.~342]{zajicek1} for related discussion).
Haar null sets are precisely the \emph{shy} sets introduced by Hunt, Sauer \& Yorke
(see \cite{huntsaueryorke, huntsaueryorkeaddendum}), who defined a subset to be \emph{prevalent}
if it is the complement of a shy set.
Morris  \cite[Thm.~1]{morris}, answering a question raised in \cite{BZ16}, showed that
$V^{!}$ is a prevalent subset of $V$;
the following Theorem \ref{frechet_prevalent} gives
an alternative proof of that result.
Note that the hypothesis is 
stronger than in Theorem \ref{tvs_residual}, and weaker than in Theorem \ref{hypersurfaces};
the conclusion is weaker than in Theorem \ref{hypersurfaces}
(though not necessarily stronger than in Theorem \ref{tvs_residual}, since prevalent sets are not necessarily residual).

\pagebreak

\begin{thm}\label{frechet_prevalent}
If $V$ is a separable Fr\'echet space densely and continuously embedded in $C(X)$,
then $$V^{!}= \{f\in V: f\text{ has a unique maximizing measure} \}$$
is 
a prevalent subset of $V$.
\end{thm}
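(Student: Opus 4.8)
The plan is to show that the complement $V\setminus V^!$ is shy (Haar null) by exhibiting it as a subset of a countable union of Borel Haar null sets, and then invoking the fact that the Haar null sets of a separable Fr\'echet space form a $\sigma$-ideal (see \cite{christensen1, christensen2}). The starting object is the convex function $\beta:V\to\R$, $\beta(f)=\sup_{\nu\in\m}\int\! f\,\mathrm{d}\nu$, already used in the proof of Theorem \ref{hypersurfaces}. As there, the inequality $|\beta(f)-\beta(g)|\leqslant\|f-g\|_\infty$ together with the continuous embedding $V\hookrightarrow C(X)$ shows that $\beta$ is continuous, and it is convex. I would reuse the computation behind \eqref{g3r89bu}: if $f\notin V^!$ and $g\in V$ \emph{separates} two maximizing measures, meaning $\int\! g\,\mathrm{d}\mu_1\neq\int\! g\,\mathrm{d}\mu_2$ for some $\mu_1,\mu_2\in\m_{\max}(f)$, then the one-sided directional derivatives of $\beta$ at $f$ in the direction $g$ disagree, so $\beta$ fails to be differentiable at $f$ in the direction $g$.

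Fix a countable dense subset $\{g_n\}_{n\in\N}$ of $V$, which exists by separability, and for each $n$ set
$$N_n=\{f\in V:\beta\text{ is not differentiable at }f\text{ in the direction }g_n\}.$$
Since $\beta$ is continuous and convex, the one-sided derivatives $f\mapsto\lim_{t\to0^+}(\beta(f\pm tg_n)-\beta(f))/t$ equal the infima over positive rational $t$ of the continuous difference quotients, hence are Borel; consequently each $N_n$ is Borel. Each $N_n$ is also Haar null: the restriction $t\mapsto\beta(f+tg_n)$ is a convex function of one real variable, so it is differentiable off a countable set, and therefore every line in the direction $g_n$ meets $N_n$ in a countable set. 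The compactly supported uniform probability measure $\mu_n$ on the segment $\{tg_n:t\in[0,1]\}$ thus satisfies $\mu_n(N_n+v)=0$ for all $v\in V$, because $\{t\in[0,1]:-v+tg_n\in N_n\}$ is exactly the countable set of non-differentiability points of the convex function $s\mapsto\beta(-v+sg_n)$.

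It then remains to verify the inclusion $V\setminus V^!\subseteq\bigcup_n N_n$. Given $f\notin V^!$, I would choose distinct $\mu_1,\mu_2\in\m_{\max}(f)$; since $V$ is dense in $C(X)$ there is some $h\in V$ with $\int\! h\,\mathrm{d}\mu_1\neq\int\! h\,\mathrm{d}\mu_2$. The linear functional $L(g)=\int\! g\,\mathrm{d}(\mu_1-\mu_2)$ is continuous on $V$ (as $V\hookrightarrow C(X)$ continuously and $\mu_1-\mu_2$ is a finite signed measure) and satisfies $L(h)\neq0$, so $L$ is nonzero on a neighbourhood of $h$, which by density contains some $g_n$. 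This $g_n$ separates $\mu_1$ and $\mu_2$, so $f\in N_n$ by the criterion above. Hence $V\setminus V^!$ is contained in the countable union of Borel Haar null sets $\bigcup_n N_n$, which is Haar null, and therefore $V^!$ is prevalent.

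The main obstacle is precisely this reduction from all directions to countably many: Gateaux differentiability of $\beta$ at $f$ is the requirement that $\beta$ be differentiable in \emph{every} direction, an uncountable condition, whereas shyness is not preserved under uncountable unions. Separability is what resolves this, since the continuity of the separating functional $L$ propagates its nonvanishing from the exact separating direction $h$ to a nearby member $g_n$ of the fixed countable dense family; this lets the genuinely one-dimensional convexity estimate behind \eqref{g3r89bu} be applied along only countably many directions, after which the $\sigma$-ideal property of Haar null sets assembles the conclusion.
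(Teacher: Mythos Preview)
Your argument is correct, and it reaches the same conclusion by a genuinely different route from the paper's. The paper applies Christensen's theorem \cite[Thm.~2]{christensen2} as a black box: since $\beta$ is Lipschitz for the continuous seminorm $\|\cdot\|_\infty$, its set of Gateaux non-differentiability is Haar null, and this set equals $V\setminus V^!$ by the computation around \eqref{g3r89bu}. You instead bypass the full Gateaux differentiability theorem and argue directly: fixing a countable dense set of directions $\{g_n\}$, each set $N_n$ of directional non-differentiability is Haar null by the elementary one-variable fact that a finite convex function on $\R$ is differentiable off a countable set, and then density of $\{g_n\}$ plus continuity of the separating functional $\mu_1-\mu_2$ gives $V\setminus V^!\subseteq\bigcup_n N_n$. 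What your approach buys is that it isolates exactly which ingredients are needed---only the $\sigma$-ideal property of Haar null sets from \cite{christensen1}, not the deeper differentiability theorem---and makes transparent why separability is essential; what the paper's approach buys is brevity, since the work is absorbed into the cited result. Your argument is in effect a self-contained proof of the convex-function case of Christensen's differentiability theorem, tailored to the structure of $\beta$.
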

\begin{proof}
As in the proof of Theorem \ref{hypersurfaces},
$|\beta (f) -\beta (g) | \leqslant \Vert f-g \Vert_{\infty}$ 
for all $f ,\, g \in V$.
But $V$ is continuously embedded in $C(X)$, 
so $\Vert \cdot \Vert_{\infty}$ is a continuous semi-norm on $V$,
therefore $\beta:V\to\R$ is Lipschitz continuous. 
By a theorem of
Christensen (see \cite[Thm.~2]{christensen2}), a Lipschitz continuous
real-valued map on a separable Fr\'echet space is Gateaux differentiable at all points except for a Haar null set, i.e.~ a shy set.
The theorem then follows from the fact that 
the points of Gateaux differentiability of
$\beta$ are precisely the elements of $V^!$, and this can be shown exactly as in the proof of 
 Theorem \ref{hypersurfaces}.
\end{proof}

\end{document}